\newtheorem{theorem}{Theorem}[section]
\newtheorem{proposition}[theorem]{Proposition}
\newtheorem{conjecture}[theorem]{Conjecture}
\newtheorem{definition}[theorem]{Definition}
\newtheorem{corollary}[theorem]{Corollary}
\newtheorem{lemma}[theorem]{Lemma}
\theoremstyle{remark}
\newtheorem{remark}[theorem]{Remark}
\def\<{\langle}
\def\>{\rangle}
\begin{document}
\title{\bf{A Poincar\'{e}-type inequality and a related eigenvalue problem}}

\author{Nan Ye%
  \thanks{School of Mathematical Sciences, Peking University,
 Beijing 100871, People's Republic of China. \texttt{yen@pku.edu.cn}. }
\and Xiang Ma%
  \thanks{LMAM, School of Mathematical Sciences, Peking University,
 Beijing 100871, People's Republic of China. \texttt{maxiang@math.pku.edu.cn}, Fax:+86-010-62751801. Corresponding author.}}

\date{\today}

\maketitle

\begin{center}
{\bf Abstract}
\end{center}

Given a smooth positive function $f$ defined on the unit circle satisfying a simple condition, we obtain a Poincar\'{e}-type inequality for an arbitrary function $u$ whose weighted average with respect to $f$ is zero. The proof uses Fenchel's theorem about the total curvature of closed space curves in an essential way. Next we consider the generalization of this result to higher dimensional closed Riemannian manifold and reduce it to an eigenvalue problem. Finally, we point out that even though such Poincar\'{e}-type inequality still holds, the best constant $\lambda_1(f)$ might be different from the first eigenvalue $\lambda_1$ by constructing explicit examples on the standard spheres and flat tori.

\hspace{2mm}

{\bf Keywords:}  Poincar\'e-type inequality, Fenchel's theorem, $f$-eigenvalue, interlace theorem. \\

{\bf MSC(2000):\hspace{2mm} }

\section{Introduction}

In this paper, we report a Poincar\'e-type inequality for functions defined on $S^1$ as below.

\begin{theorem}\label{thm-main}
Let $f(\theta)$ be a fixed positive $2\pi$-period $C^1$ function on $\mathbb{R}$ such that
\begin{equation}\label{eq-orthogonal}
\int_0^{2\pi}f(\theta)\cos\theta d\theta=\int_0^{2\pi}f(\theta)\sin\theta d\theta=0.
\end{equation}
Then for any $2\pi$-period $C^1$ function $\phi$ satisfying \begin{equation}\label{eq-average}
\int_0^{2\pi}f(\theta)\phi(\theta)d\theta=0,
\end{equation}
we have
\begin{equation}\label{eq-main}
\int_0^{2\pi}\phi(\theta)^2d\theta\leq
\int_0^{2\pi}(\phi'(\theta))^2d\theta.
\end{equation}
The equality is attained only when
$\phi(\theta)=a\cos\theta+b\sin\theta$ for real constants $a,b$.
\end{theorem}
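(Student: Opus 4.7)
The plan is to work in Fourier series on $S^1$ and reduce the inequality to a geometric statement about a planar convex curve built out of $f$. Write
\[
\phi(\theta) = a_0 + \sum_{n\ge 1}(a_n\cos n\theta + b_n\sin n\theta),\qquad
f(\theta) = c_0 + \sum_{n\ge 1}(c_n\cos n\theta + d_n\sin n\theta);
\]
then \eqref{eq-orthogonal} is exactly $c_1=d_1=0$, and by Parseval the target inequality \eqref{eq-main} is equivalent to
\[
2a_0^2 \;\le\; \sum_{n\ge 2}(n^2-1)(a_n^2+b_n^2),
\]
while the constraint \eqref{eq-average} reads $2c_0 a_0 + \sum_{n\ge 2}(c_n a_n + d_n b_n)=0$. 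A single application of the Cauchy--Schwarz inequality with weights $\sqrt{n^2-1}$ to this last identity gives
\[
4c_0^2 a_0^2 \;\le\; \Bigl(\sum_{n\ge 2}\tfrac{c_n^2+d_n^2}{n^2-1}\Bigr)\Bigl(\sum_{n\ge 2}(n^2-1)(a_n^2+b_n^2)\Bigr),
\]
so the whole problem reduces to proving the ``$f$-inequality''
\[
\sum_{n\ge 2}\frac{c_n^2+d_n^2}{n^2-1} \;\le\; 2c_0^2,
\]
call it $(\ast)$.

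To prove $(\ast)$ I would exploit the geometric meaning of $f$. Because of \eqref{eq-orthogonal}, the planar curve
\[
\gamma(\theta) = \Bigl(\int_0^\theta f(s)\cos s\,ds,\;\int_0^\theta f(s)\sin s\,ds\Bigr)
\]
is closed; because $f>0$, its unit tangent is precisely $(\cos\theta,\sin\theta)$, which rotates monotonically through $2\pi$, and the total curvature is $\int\kappa\,ds = \int d\theta = 2\pi$. This is where Fenchel's theorem enters essentially: in its sharp form, a closed curve with total curvature exactly $2\pi$ must be a simple convex planar curve, so $\gamma$ bounds a region of strictly positive area $A$. The support function $h$ of this region satisfies $h+h''=f$ (the condition \eqref{eq-orthogonal} is precisely the Fredholm-alternative condition for this ODE on $S^1$), so its Fourier coefficients are $p_n=c_n/(1-n^2)$ and $q_n=d_n/(1-n^2)$ for $n\ge 2$, with $p_1=q_1=0$ achievable by translating the body. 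The standard formulas $L=\int_0^{2\pi}f\,d\theta=2\pi c_0$ and $A=\tfrac{1}{2}\int(h^2-(h')^2)\,d\theta$, combined with Parseval, then yield the identity
\[
\sum_{n\ge 2}\frac{c_n^2+d_n^2}{n^2-1} \;=\; 2c_0^2 - \frac{2A}{\pi},
\]
so $(\ast)$ follows, with strict inequality, from $A>0$.

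For the equality case, the strict inequality $A>0$ makes the Cauchy--Schwarz step strict unless both of its sides vanish, which forces $a_n=b_n=0$ for every $n\ge 2$; the constraint then gives $a_0=0$, leaving $\phi(\theta)=a_1\cos\theta+b_1\sin\theta$ exactly as claimed. The main obstacle, I expect, will be cleanly justifying the geometric step---that $\gamma$ really is a simple closed convex curve rather than, say, a self-intersecting curve of turning number one---since $f>0$ alone only yields positive curvature. This is precisely the content that Fenchel's theorem supplies here.
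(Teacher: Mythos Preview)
Your proof is correct and takes a genuinely different route from the paper's. Both arguments pivot on the same Fourier inequality $(\ast)$ for $f$ and on the same Cauchy--Schwarz link between $(\ast)$ and the main inequality, but the logical flow is reversed. The paper first proves the main inequality directly by building a one-parameter family of closed curves in $\mathbb{R}^3$, namely $\gamma_\sigma(\theta)=\int_0^\theta f(\cos\theta,\sin\theta,\sigma\phi)\,d\theta$, and applying Fenchel's inequality $\int\kappa\,ds\ge 2\pi$ to conclude $L''(0)\ge 0$, which is exactly \eqref{eq-main}; it then \emph{deduces} the weak form of $(\ast)$ from the main inequality (their Lemma~2.1), sharpens it to strict inequality via the perturbation $f\mapsto f-\tfrac{\epsilon}{2}$ (their Lemma~2.2), and uses that to pin down the equality case. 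You instead prove the strict form of $(\ast)$ first, by recognizing the planar curve $\gamma$ as a convex body with radius-of-curvature function $f$ and reading off the identity $\sum_{n\ge2}\frac{c_n^2+d_n^2}{n^2-1}=2c_0^2-\frac{2A}{\pi}$ from the support-function area formula; here you invoke the \emph{equality case} of Fenchel to guarantee convexity and hence $A>0$. The main inequality and its equality characterization then both fall out of a single Cauchy--Schwarz step. Your approach gives $(\ast)$ a transparent geometric meaning (positivity of an enclosed area) and dispatches the equality case without the separate perturbation trick; the paper's approach has the complementary virtue that its $3$-dimensional curve encodes $f$ and $\phi$ simultaneously, so the inequality itself drops out with no preliminary reduction. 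The obstacle you flag---that positive curvature with turning number one does not by itself rule out self-intersection---is genuine, and the equality clause of Fenchel (or the elementary observation that each linear height function $\theta\mapsto\gamma(\theta)\cdot v$ has a unique maximum on $[0,2\pi)$, since its derivative is $f(\theta)\cos(\theta-\alpha)$) is exactly what closes it.
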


To the best of our knowledge, this inequality seems to be a new result. It is an unexpected byproduct of our study of the global geometry of closed curves in 3-dimensional (Euclidean and Lorentz) space, and the proof follows from the famous Fenchel's theorem (see Section~2 and {\cite{MaYe}}).

We tried to find a purely analytical proof without success. This situation raises two tantalizing questions: What is the significance of this analytical result? Can we generalize it to functions on other higher dimensional Riemannian manifolds?

Notice that if we take $f\equiv 1$ identically in the theorem above, then \eqref{eq-orthogonal} holds true automatically, and our result reduces to the classical Wirtinger's inequality. The natural generalization of the latter to higher dimensional spaces is Poincar\'e inequality \cite{SchoenYau}, which states that for a given closed Riemannian manifold $(M^n,g)$, there exists a universal constant $C>0$ such that for any $C^1$ functions $u$ satisfying
\begin{equation}\label{eq-average}
\int_M u d\mu =0,
\end{equation}
there always holds
\begin{equation}
C\int_M  u^2 d\mu ~\leq~ \int_M|\nabla u|^2 d\mu.
\end{equation}
In particular, $C$ can be chosen to be $\lambda_1$, the first eigenvalue of the standard Laplacian on $(M^n,g)$.

Based on this observation, we propose the following conjecture as a parallel generalization of our Poincar\'e-type inequality \eqref{eq-main}:
\begin{conjecture}\label{conj-f}
Let $(M^n,g)$ be a compact Riemannian manifold with $\partial M=\emptyset$. Let $\lambda_1$ be the first eigenvalue of the Laplace operator $\Delta$ on $(M^n,g)$ and $\mathscr{U}_1$ be the first eigenspace of $\Delta$. Let $f:M\rightarrow\mathbb{R}^+$ be a fixed smooth function satisfying $f\perp \mathscr{U}_1$. Then for any smooth function $\phi:M\rightarrow\mathbb{R}$ satisfying $\int_M f\phi d\mu=0$, we have
\begin{equation}\label{eq-poincaretype}
\lambda_1\int_M\phi^2d\mu\leq\int_M|\nabla\phi|^2d\mu.
\end{equation}
\end{conjecture}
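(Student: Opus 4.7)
The plan is to treat the conjecture as a constrained minimization of the Rayleigh quotient and reduce it, via Lagrange multipliers, to analyzing the roots of a single scalar secular equation built from the spectrum of $\Delta$ and the Fourier expansion of $f$.

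Introduce
$$ \lambda_1(f) \;:=\; \inf\Bigl\{\, \tfrac{\int_M|\nabla\phi|^2\, d\mu}{\int_M\phi^2\, d\mu} \,:\, \phi\in C^\infty(M)\setminus\{0\},\ \int_M f\phi\, d\mu=0\,\Bigr\}. $$
The conjectural inequality is equivalent to $\lambda_1(f)\geq\lambda_1$. Since $f\perp\mathscr{U}_1$ by hypothesis, every first eigenfunction $e\in\mathscr{U}_1$ is admissible and realizes the Rayleigh quotient value $\lambda_1$; hence $\lambda_1(f)\leq\lambda_1$ for free. Thus $\lambda_1(f)=\lambda_1$ is the real claim, and only the lower bound requires work.

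By Rellich--Kondrachov the infimum is attained by some $\phi_*$, and Lagrange multipliers produce constants $\mu=\lambda_1(f)$ and $c\in\R$ with
$$ -\Delta\phi_*-\mu\phi_* \;=\; c\,f. $$
If $c=0$, then $\phi_*$ is an eigenfunction of $\Delta$; since $\phi_*$ is $L^2$-orthogonal to the positive function $f$ it cannot be a nonzero constant, so its eigenvalue $\mu$ is at least $\lambda_1$ and we are done. The substantive case is $c\neq 0$. Fix a complete orthonormal basis of eigenfunctions $\{e_i\}_{i\geq 0}$ with $\Delta e_i=-\lambda_i e_i$ and $0=\lambda_0<\lambda_1\leq\lambda_2\leq\cdots$, and expand $f=\sum_i a_i e_i$. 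The hypothesis $f\perp\mathscr{U}_1$ kills all coefficients $a_i$ with $\lambda_i=\lambda_1$, while $f>0$ forces $a_0>0$. Solving the Euler equation mode by mode and imposing $\int_M f\phi_*\,d\mu=0$ then reduces the problem to the scalar secular equation
$$ G(\mu) \;:=\; \sum_{\lambda_i\neq\lambda_1}\frac{a_i^2}{\lambda_i-\mu} \;=\; 0. $$

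Because $G$ is strictly increasing between consecutive poles with $G(0^+)=-\infty$ and $G(\lambda_2^-)=+\infty$, it has a unique root $\mu_*\in(0,\lambda_2)$, and one finds $\lambda_1(f)=\min(\mu_*,\lambda_1)$. Hence the whole conjecture collapses to a single analytic inequality:
$$ G(\lambda_1) \;=\; -\frac{a_0^2}{\lambda_1}+\sum_{\lambda_i>\lambda_1}\frac{a_i^2}{\lambda_i-\lambda_1} \;\leq\; 0. $$
Establishing this from the sole hypotheses $f>0$ and $f\perp\mathscr{U}_1$ is, I expect, the decisive obstacle. Setting $T:=(-\Delta-\lambda_1)^{-1}$ on $\mathscr{U}_1^\perp$, the inequality is precisely the assertion that the indefinite quadratic form $\langle f,Tf\rangle_{L^2}$ is non-positive on the cone of positive functions orthogonal to $\mathscr{U}_1$; this is a global spectral positivity statement with no obvious pointwise witness. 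Given the counterexamples on spheres and flat tori promised in the abstract, this bound plausibly \emph{fails} in general, so the plan would ultimately conclude either by identifying the extra hypothesis on $(M,g)$ or on $f$ that forces $G(\lambda_1)\leq 0$, or by using the secular equation as the route to the sharper correct constant $\lambda_1(f)$ in place of $\lambda_1$.
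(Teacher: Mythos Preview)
Your reduction to the secular equation $G(\mu)=0$ is correct, and in fact your criterion $G(\lambda_1)\le 0$ specializes on $S^1$ (where $\lambda_0=0$, $\lambda_1=1$, $\lambda_n=n^2$) to exactly the inequality $\sum_{n\ge 2}(a_n^2+b_n^2)/(n^2-1)\le a_0^2/2$ of the paper's Lemma~\ref{lem-1}, which the paper establishes via Fenchel's theorem. So the analytic skeleton you isolate is the right one.

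However, the paper does not prove the conjecture: it \emph{disproves} it on $S^n$ ($n\ge 2$) and on the square flat torus. Its route to the counterexamples is quite different from your secular-equation viewpoint. Rather than searching for a positive $f\perp\mathscr{U}_1$ with $G(\lambda_1)>0$ (awkward, since positivity of $f$ is hard to encode in Fourier coefficients; on $T^2$ the two-mode ansatz $f=c_0+c_2\cos x\cos y$ requires $|c_2|>2c_0$ for $G(1)>0$ but $|c_2|<c_0$ for $f>0$), the paper reverses the order. It first writes down a concrete test function $\phi=\epsilon+\phi_2$ with $\phi_2$ a second eigenfunction, checks by direct computation that $R(\phi)<\lambda_1$ holds for $\epsilon$ in an explicit nonempty range, and only then argues the existence of a suitable $f$. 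That last step exploits symmetry: on $S^n$ and the square torus the first eigenspace $\mathscr{U}_1$ consists of odd functions, so any even positive $f$ is automatically orthogonal to $\mathscr{U}_1$, and since $\phi$ changes sign one can manufacture an even positive $f$ with $\int_M f\phi\,d\mu=0$.

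In short, your proposal correctly identifies $G(\lambda_1)\le 0$ as the decisive inequality but stops at the point of resolving it; the paper's contribution is precisely the concrete construction showing $G(\lambda_1)>0$ can occur. The paper's approach sidesteps the Fourier-positivity obstruction by building $\phi$ first and $f$ second; yours gives a clean quantitative criterion and explains neatly why $S^1$ is special (there $G(\lambda_1)\le 0$ genuinely follows from $f>0$, by Fenchel).
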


If we denote the Rayleigh quotient by $R(u)=\int|\nabla u|^2d\mu/\int u^2 d\mu$, then this problem is equivalent to showing
\begin{equation}\label{}
\lambda_1\le
    \inf_{u\in f^\perp}R(u).
    \end{equation}
The right hand side could be viewed as a constrained variation problem, whose corresponding Euler-Lagrange equation is
\begin{equation}\label{}
\triangle u+R(u)u\equiv0~~(\mathrm{mod} ~f).
 \end{equation}
Thus we make the following definitions.

\begin{definition}\label{def-eigenvalue}
Let $(M^n,g)$ be a closed Riemannian manifold with a fixed positive function $f$ defined on it.
A constant $\lambda$ is called an \emph{$f$-eigenvalue} if there exist some non-trivial smooth function $u\in f^\perp$ (with respect to the $L^2$ norm on the function space over $(M^n,g)$) such that
\begin{equation}
\triangle u\equiv-\lambda u ~~(\mathrm{mod} ~f).
\end{equation}
This corresponding $u$ is called the related $f$-eigenfunction.
\end{definition}

As usual, such $f$-eigenvalues form a spectrum $\{\lambda_k(f)\}_{k=1}^{+\infty}$, called \emph{the $f$-spectrum}:
\begin{equation}
0\leq\lambda_1(f)\leq\lambda_2(f)\leq\cdots\leq\lambda_k(f)
\leq\cdots
\end{equation}
They share many similar properties as the usual eigenvalues $\{\lambda_k\}_{k=1}^{+\infty}$ \cite{Chavel}. In particular, the min-max principle still holds true. This enables us to generalize the classical Cauchy interlace theorem directly (which is also known as Poincar\'e separation theorem; for reference, please see \cite{Dancis, Davis, Hwang}). As part of the results, generally we have
\[\lambda_1(f)=
    \inf_{u\in f^\perp}R(u)\leq\lambda_1.\]
We observe that:

1) $f\perp \mathscr{U}_1$ is
a necessary condition for $\lambda_1(f)=\lambda_1$.

2) Conjecture~\ref{conj-f} means that $f\perp \mathscr{U}_1$ is also sufficient to derive $\lambda_1(f)=\lambda_1$.\\

Although we obtained the interlace theorem and some other results, our attempt to prove Conjecture~\ref{conj-f} all failed. Finally this conjecture turns out to be not true.\\

\textbf{Conclusion}: There exist counterexamples to Conjecture~\ref{conj-f} on the spheres $S^n (n\ge 2)$ (with the standard metric) and on the square torus $T^2$ (with the flat metric). \\

The basic idea for the construction of these counterexamples is:

1. Choose $M$ from the simplest Riemannian manifolds with many symmetries whose eigenfunctions and eigenvalues are known explicitly.

2. Using the Ansatz that $\phi=\phi_2+\epsilon$, i.e., a combination of some second eigenfunction $\phi_2$ and the constant function. The constant $\epsilon$ is chosen suitably so that $\phi$ changes sign on $M$, at the same time the Rayleigh quotient $R(\phi)=\int|\nabla \phi|^2d\mu/\int \phi^2 d\mu$ is smaller than $\lambda_1$.

3. Show that there exists a positive even function $f$ orthogonal to the first eigenspace $\mathscr{U}_1$ (which consist of only odd functions) and $\phi$ (which changes its sign).

Note that these counterexamples are only with respect to some special functions $f$. If the function $f$ are given arbitrarily, the answer might be different. For example, the Poincar\'e-type inequality is still true when $f$ is a very small perturbation of the constant function. In view of these facts, the following problem seems to be subtle and interesting.\\

\textbf{Problem}: Is there any closed Riemannian manifold $(M^n, g)$ other than $S^1$ such that the conclusion of Conjecture~\ref{conj-f} holds true for any positive function $f\perp \mathscr{U}_1$?\\

We guess the answer is negative. If so, then $S^1$ can be characterized as the unique Riemannian manifold on which the generalized Poincar\'e inequality \eqref{eq-poincaretype} holds true under the weakest conditions in Conjecture~\ref{conj-f}.

This paper is organized as follows. In Section~2 we prove Theorem~\ref{thm-main}. The formulation of the eigenvalue problem and the estimation of the $f$-eigenvalues are presented in Section~3 .
We give the construction of the counterexamples to Conjecture~\ref{conj-f} in the last section.\\

\textbf{Acknowledgement}~~
This work is supported by the Project 11471021 of the
National Natural Science Foundation of China.

\section{The proof to the main theorem}

This section establishes the Poincar\'e type inequality for functions on $S^1$. The inequality is obtained as a corollary of Fenchel's theorem \cite{Chern, docarmo} on the total curvature of closed curves in $\mathbb{R}^3$. Then we will characterize the equality case in the second subsection.

\subsection{The inequality follows from Fenchel's theorem}

By the assumption of Theorem~1.1, we are given a fixed $2\pi$-period $C^1$ function $f(\theta)$ on $\mathbb{R}$ satisfying:
1) $\int_0^{2\pi}f(\theta)\cos\theta d\theta=\int_0^{2\pi}f(\theta)\sin\theta d\theta=0;$
2) $f$ is a positive function.
Set
\begin{equation}\label{eq-curve}
\gamma_\sigma(\theta)=\int_0^\theta f(\theta)(\cos\theta,\sin\theta,\sigma\phi(\theta))d\theta,
\theta\in[0,2\pi],\sigma\in(-\epsilon,\epsilon).
\end{equation}
Then $\gamma_\sigma$ form an one-parameter family of $C^2$ closed curves in $\mathbb{R}^3$ (which differ from each other by a vertical scaling). By standard computation we find their total curvatures as
\begin{equation}\label{eq-curvature}
L(\sigma)\triangleq\int_{\gamma_\sigma}kds
=\int_0^{2\pi}\frac{\sqrt{1+\sigma^2\phi^2+\sigma^2(\phi')^2}}
{1+\sigma^2\phi^2}d\theta.
\end{equation}
Fenchel's theroem says $L(\sigma)\geq2\pi=L(0)$. Thus there should hold $L'(0)=0$ (which is obviously true for our $\gamma_\sigma$), and $L''(0)\geq0$. A direct computation shows that
\begin{equation}
L''(0)=\int_0^{2\pi}(\phi'(\theta))^2d\theta-\int_0^{2\pi}
\phi(\theta)^2d\theta,
\end{equation}
so the inequality follows immediately.

\subsection{The equality case}
We have verified the inequality as above. To determine the equality case, which is more involved and subtler, we need to use this inequality to prove two technical lemmas beforehand.

Before giving the statement of the lemmas, we observe that the positivity of $f$ is a crucial assumption in Theorem~\ref{thm-main}. Without this assumption, the orthogonality condition $f\perp \mathscr{U}_1$ is not sufficient to establish the inequality \eqref{eq-main}. (An obvious counterexample is provided by choosing $f=\phi_2$ as the second eigenfunction and $\phi\equiv 1$.) Geometrically, the assumption $f>0$ guarantees that $\theta$ is a regular parameter of the closed space curves $\gamma_\sigma$ as above. Analytically, this requires that in the Fourier series of $f$, the first coefficient should not be too small compared with other coefficients. The following two lemmas provide a quantitative description of this restriction.

\begin{lemma}\label{lem-1}
Let $f(\theta)$ be a fixed $2\pi$-period $C^1$ function on $\mathbb{R}$ with
\begin{equation}\label{eq-orthogonal2}
\int_0^{2\pi}f(\theta)\cos\theta d\theta=\int_0^{2\pi}f(\theta)\sin\theta d\theta=0.
\end{equation}
Write
$f(\theta)=\frac{a_0}{2}+\sum_{n=2}^\infty(a_n\cos n\theta+b_n\sin n\theta).$
Suppose $f$ is positive. Then
\begin{equation}\label{eq-15}
\sum_{n=2}^\infty\frac{a_n^2+b_n^2}{n^2-1}\leq\frac{a_0^2}{2}.
\end{equation}
\end{lemma}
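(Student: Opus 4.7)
The plan is to deduce \eqref{eq-15} by applying Theorem~\ref{thm-main} to a carefully chosen test function $\phi$ whose Fourier expansion is tailored to $f$. Written in Fourier coefficients via Parseval, the conclusion of Theorem~\ref{thm-main} applied to $\phi=\tfrac{c_0}{2}+\sum_{n\ge 1}(c_n\cos n\theta+d_n\sin n\theta)$ reads
\[
\frac{c_0^2}{2}\;\le\;\sum_{n\ge 2}(n^2-1)(c_n^2+d_n^2),
\]
provided the constraint $\int_0^{2\pi} f\phi\,d\theta=0$ holds, which reduces to $\tfrac{a_0c_0}{2}+\sum_{n\ge 2}(a_nc_n+b_nd_n)=0$ since $a_1=b_1=0$. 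The target inequality \eqref{eq-15} has exactly the reciprocal weights $1/(n^2-1)$, which suggests the Cauchy--Schwarz-extremal choice $c_n=a_n/(n^2-1)$, $d_n=b_n/(n^2-1)$.

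Guided by this, I would define
\[
\phi(\theta)\;:=\;\frac{c_0}{2}+\sum_{n=2}^\infty\frac{a_n\cos n\theta+b_n\sin n\theta}{n^2-1},
\]
and set $S:=\sum_{n\ge 2}\frac{a_n^2+b_n^2}{n^2-1}$. The positivity of $f$ gives $a_0=\tfrac{1}{\pi}\int_0^{2\pi}f\,d\theta>0$, so the constraint $\int f\phi\,d\theta=0$ uniquely determines $c_0=-2S/a_0$. Next I would verify that $\phi\in C^1$: since $f\in C^1$ yields $\sum n^2(a_n^2+b_n^2)<\infty$, using $\tfrac{n}{n^2-1}\le \tfrac{2}{n}$ for $n\ge 2$ together with Cauchy--Schwarz shows $\sum n(|c_n|+|d_n|)<\infty$, so the series for $\phi'$ converges absolutely and uniformly and $\phi$ is admissible in Theorem~\ref{thm-main}.

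Plugging this $\phi$ into the Fourier form of Theorem~\ref{thm-main}, the right-hand side minus the non-constant part of the left-hand side collapses to $\sum_{n\ge 2}\frac{(n^2-1)(a_n^2+b_n^2)}{(n^2-1)^2}=S$, reducing the inequality to $c_0^2/2\le S$, i.e., $2S^2/a_0^2\le S$, which is exactly \eqref{eq-15}. I do not anticipate any serious obstacle: the only care needed is the verification of $C^1$ regularity of the test function and the strict positivity of $a_0$, both immediate from the hypotheses. The conceptual point is simply that $c_n=a_n/(n^2-1)$ is the Lagrange-multiplier extremizer for the constrained form of Theorem~\ref{thm-main}, which is precisely what converts the Poincar\'e-type inequality into the statement of the lemma.
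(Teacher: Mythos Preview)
Your proof is correct and follows essentially the same route as the paper: both reduce Lemma~\ref{lem-1} to the Fourier form of the inequality in Theorem~\ref{thm-main}, namely $\tfrac{c_0^2}{2}\le\sum_{n\ge 2}(n^2-1)(c_n^2+d_n^2)$ under the constraint $\tfrac{a_0c_0}{2}+\sum_{n\ge 2}(a_nc_n+b_nd_n)=0$. The only cosmetic difference is that the paper keeps the test function arbitrary, rewrites the resulting inequality as $\langle\xi,\eta\rangle^2/|\eta|^2\le a_0^2/2$ for all $\eta$, and then invokes Cauchy--Schwarz to pass to the supremum $|\xi|^2$, whereas you go straight to the extremizer $c_n=a_n/(n^2-1)$, $d_n=b_n/(n^2-1)$ (the choice $\eta\parallel\xi$) and read off $2S^2/a_0^2\le S$; your added remark on the $C^1$ regularity of the test function is a detail the paper leaves implicit.
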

\begin{proof}
Consider an arbitrary $2\pi$-period $C^1$ function $\phi(\theta)$ orthogonal to $f$ and write
\begin{equation}\label{eq-16}
\phi(\theta)=\frac{c_0}{2}+\sum_{n=1}^\infty(c_n\cos n\theta+d_n\sin n\theta).
\end{equation}
The orthogonality implies
\begin{equation}\label{eq-17}
c_0=-\frac{2}{a_0}\sum_{n=2}^\infty(a_nc_n+b_nd_n).
\end{equation}
By the inequality of Theorem~\ref{thm-main} established in the previous subsection, there follows
\begin{equation}\label{eq-18}
\int_0^{2\pi}\phi(\theta)^2d\theta\leq\int_0^{2\pi}
(\phi'(\theta))^2d\theta.
\end{equation}
Substituting \eqref{eq-16} and \eqref{eq-17} into \eqref{eq-18}, we obtain
\begin{equation}\label{eq-19}
\frac{2}{a_0^2}\left[\sum_{n=2}^\infty(a_nc_n+b_nd_n)\right]^2= \frac{c_0^2}{2}\le \sum_{n=2}^\infty(n^2-1)(c_n^2+d_n^2).
\end{equation}
Consider the following two vectors in the $\ell^2$ space:
\begin{eqnarray}
\xi&=&(\frac{a_2}{\sqrt{2^2-1}},\frac{b_2}{\sqrt{2^2-1}},
\cdots,\frac{a_n}{\sqrt{n^2-1}},\frac{b_n}{\sqrt{n^2-1}},\cdots),
\label{eq-20}\\
\eta&=&(\sqrt{2^2-1}c_2,\sqrt{2^2-1}d_2,\cdots,\sqrt{n^2-1}c_n,
\sqrt{n^2-1}d_n,\cdots).
\label{eq-21}
\end{eqnarray}
Then \eqref{eq-19} is equivalent to
\begin{equation}\label{eq-22}
\frac{\langle\xi,\eta\rangle^2}{|\eta|^2}\leq\frac{a_0^2}{2}
\end{equation}
for arbitrary nonzero $\eta\in\ell^2$. On the other hand, the Cauchy-Schwarz inequality implies
\begin{equation}\label{eq-23}
\sup_{\eta\neq0}\frac{\langle\xi,\eta\rangle^2}{|\eta|^2}=|\xi|^2=\sum_{n=2}^\infty
\frac{a_n^2+b_n^2}{n^2-1}.
\end{equation}
The conclusion follows directly from \eqref{eq-22} and \eqref{eq-23}.
\end{proof}

\begin{lemma}\label{lem-2}
Under the same assumption as Lemma~\ref{lem-1}, we have
\begin{equation}\label{eq-24}
\sum_{n=2}^\infty\frac{a_n^2+b_n^2}{n^2-1}<\frac{a_0^2}{2}.
\end{equation}
\end{lemma}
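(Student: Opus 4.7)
My plan is to prove Lemma~\ref{lem-2} by contradiction: assume that equality holds in Lemma~\ref{lem-1}, and exhibit an explicit function $\phi$ that must then saturate the inequality \eqref{eq-main} in a forbidden way.

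First I would dispose of the trivial case. If $a_n=b_n=0$ for all $n\ge 2$, then $f\equiv a_0/2$ and the left hand side of \eqref{eq-24} is $0$, while the right hand side is $a_0^2/2>0$ since $f$ is positive. So I may assume that the vector $\xi$ in \eqref{eq-20} is nonzero.

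Now suppose for contradiction that $\sum_{n=2}^\infty(a_n^2+b_n^2)/(n^2-1) = a_0^2/2$. Tracing the proof of Lemma~\ref{lem-1}, this is precisely the case of equality in the Cauchy--Schwarz step \eqref{eq-23}, attained by $\eta$ proportional to $\xi$. This suggests defining $\phi$ by the Fourier series with coefficients
\[
c_0=-a_0,\quad c_1=d_1=0,\quad c_n=\frac{a_n}{n^2-1},\ d_n=\frac{b_n}{n^2-1}\ (n\ge 2).
\]
Since $f\in C^1$ gives $a_n,b_n=O(1/n)$, we have $c_n,d_n=O(1/n^3)$, so the series and its term-by-term derivative converge uniformly and $\phi\in C^1$. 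Then I would verify three things by direct Parseval-type computation: (i) $\int_0^{2\pi}f\phi\,d\theta=0$, which follows from $c_0=-\frac{2}{a_0}\sum_{n\ge 2}(a_nc_n+b_nd_n)$ combined with the equality assumption; (ii) $\int_0^{2\pi}\phi^2\,d\theta=\int_0^{2\pi}(\phi')^2\,d\theta$, which reduces via Parseval to $\frac{c_0^2}{2}=\sum_{n\ge 2}(n^2-1)(c_n^2+d_n^2)$ and again uses the equality assumption; (iii) $\phi$ is nontrivial (in fact $c_0=-a_0<0$).

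The contradiction then comes from the equality characterization in Theorem~\ref{thm-main}: item (ii) forces $\phi(\theta)=\alpha\cos\theta+\beta\sin\theta$ for some constants $\alpha,\beta$, but such a $\phi$ has vanishing zeroth Fourier coefficient, contradicting $c_0=-a_0\ne 0$. The only step that needs real care is this appeal to the equality case of Theorem~\ref{thm-main}; everything else is bookkeeping with Fourier coefficients. Since Theorem~\ref{thm-main} and its equality statement are established earlier in the section, I can use them freely, and the main obstacle is really just picking the right test function $\phi$ so that it achieves equality in \eqref{eq-main} yet visibly fails to be of the form $\alpha\cos\theta+\beta\sin\theta$.
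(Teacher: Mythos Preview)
Your argument is circular. In the paper's logical order, the equality characterization in Theorem~\ref{thm-main} is proved \emph{after} Lemma~\ref{lem-2} and in fact \emph{relies} on it: the line ``we also have $|\xi|^2<\frac{a_0^2}{2}$ by Lemma~\ref{lem-2}'' is exactly what forces $\eta=0$ and hence $\phi=c_1\cos\theta+d_1\sin\theta$. At the point where Lemma~\ref{lem-2} is being proved, only the bare inequality \eqref{eq-main} (from Fenchel's theorem in Subsection~2.1) and the non-strict Lemma~\ref{lem-1} are available; the equality clause of Theorem~\ref{thm-main} is not. Your computations (i)--(iii) are correct and your test function is the natural one, but the final step---``item (ii) forces $\phi(\theta)=\alpha\cos\theta+\beta\sin\theta$''---invokes precisely the statement whose proof is still pending.

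The paper sidesteps this with a short perturbation argument that uses only Lemma~\ref{lem-1}. Since $f$ is positive and continuous on a compact set, $\min f>0$, so $\tilde f=f-\epsilon/2$ is still positive for small $\epsilon>0$ and still satisfies \eqref{eq-orthogonal2}; its constant Fourier coefficient is $a_0-\epsilon$ while the higher coefficients are unchanged. Applying Lemma~\ref{lem-1} to $\tilde f$ yields
\[
\sum_{n\ge 2}\frac{a_n^2+b_n^2}{n^2-1}\le \frac{(a_0-\epsilon)^2}{2}<\frac{a_0^2}{2}.
\]
This is what breaks the apparent circularity and allows the equality case of Theorem~\ref{thm-main} to be deduced afterwards.
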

\begin{proof}
As a positive $C^1$ function, $f$ has a positive minimal value. So $f(\theta)-\frac{\epsilon}{2}$ is also a positive $2\pi-$period $C^1$ function for $\epsilon$ small enough.
Applying Lemma~\ref{lem-1} to $\tilde{f}(\theta)=f(\theta)-\frac{\epsilon}{2}=\frac{a_0-\epsilon}{2}
+\sum_{n=2}^\infty(a_n\cos n\theta+b_n\sin n\theta)$ instead of $f(\theta)$, we know
\begin{equation*}
\sum_{n=2}^\infty\frac{a_n^2+b_n^2}{n^2-1}
\leq\frac{(a_0-\epsilon)^2}{2}<\frac{a_0^2}{2}.\qedhere
\end{equation*}
\end{proof}

\begin{proof}[Proof of Theorem~\ref{thm-main}]
After establishing the inequality in Subsection~2.1, here we need only to determine the equality case. Assume that the equality holds true in \eqref{eq-main} for some function $\phi(\theta)$. Write out the Fourier series:
\begin{eqnarray}
\phi(\theta)&=&\frac{c_0}{2}+\sum_{n=1}^\infty(c_n\cos n\theta+d_n\sin n\theta),\label{eq-25}\\
f(\theta)&=&\frac{a_0}{2}+\sum_{n=2}^\infty(a_n\cos n\theta+b_n\sin n\theta).\label{eq-26}
\end{eqnarray}
Then
\begin{eqnarray}
\int_0^{2\pi}f(\theta)\phi(\theta)d\theta=0&\Leftrightarrow& c_0=-\frac{2}{a_0}\sum_{n=2}^\infty(a_nc_n+b_nd_n),
\label{eq-27}\\
\int_0^{2\pi}\phi(\theta)^2d\theta=\int_0^{2\pi}(\phi'(\theta))^2d
\theta&\Leftrightarrow&\sum_{n=2}^\infty(n^2-1)(c_n^2+d_n^2)
=\frac{c_0^2}{2}.\label{eq-28}
\end{eqnarray}
By \eqref{eq-27} and \eqref{eq-28}, we have
\begin{equation}\label{eq-29}
\frac{a_0^2}{2}\sum_{n=2}^\infty(n^2-1)(c_n^2+d_n^2)
=\left[\sum_{n=2}^\infty(a_nc_n+b_nd_n)\right]^2.
\end{equation}
Using the same notations as in \eqref{eq-20} and \eqref{eq-21}, and invoking the Cauchy-Schwarz inequality, from \eqref{eq-29} we obtain
\begin{equation}
\frac{a_0^2}{2}|\eta|^2 =\langle\xi,\eta\rangle^2 \leq |\xi|^2|\eta|^2.
\end{equation}
On the other hand, we also have $|\xi|^2<\frac{a_0^2}{2}$ by Lemma~\ref{lem-2}.
So there must be $\eta=0$, i.e. $c_n=d_n=0$ for all $n\geq2$. Hence $c_0=0$ by \eqref{eq-27}. Finally we have $\phi(\theta)=c_1\cos\theta+d_1\sin\theta$ as desired.
\end{proof}

\begin{remark}
It is interesting to note that the condition of the equality can be deduced from the inequality \eqref{eq-main} itself. The key observation is that the stronger inequality \eqref{eq-24} can be deduced from the weaker \eqref{eq-15} by allowing a small perturbation of $f$ to $f-\frac{\epsilon}{2}$.
\end{remark}

Conjecture~\ref{conj-f} is a generalization of the results in this section to any closed Riemannian manifold $M$. The orthogonality assumptions $f\perp \cos\theta,\sin\theta$ and $f\perp \phi$ enable us to recover closed space curves in $\mathbb{R}^3$ when $M=S^1$. But for other Riemannian manifolds we lack such a geometric picture. Neither can we find a connection with geometric quantities like the total curvature. So we need new method to solve this problem. The next section is such an attempt.

\section{The $f$-eigenvalues and its estimation}

In this section we fix a closed Riemannian manifold $M$ and a positive function $f:M\to \mathbb{R}^+$.
In the introduction we have noticed that Conjecture~\ref{conj-f} is essentially about finding out the infimum of the Rayleigh quotient $R(u)$ where the function $u$ is restricted on the orthogonal complement of $f$ in the function space. Suppose  $u\in f^\perp$ is a critical point of this variational problem. Then for arbitrary $v\in f^\perp$, the $v$-variation of $R(u)$ is zero. By direct computation, that means
\begin{eqnarray*}
0=\delta_vR(u)&=&\left.\frac{d}{ds}\right|_{s=0}R(u+sv)
=\left.\frac{d}{ds}\right|_{s=0}\frac{\int_M|\nabla u+s\nabla v|^2d\mu}{\int_M(u+sv)^2d\mu}\\
&=&\frac{2\int_M\nabla u\cdot\nabla vd\mu}{\int_Mu^2d\mu}-2\frac{\int_M|\nabla u|^2d\mu}{(\int_Mu^2d\mu)^2}\int_Muvd\mu\\
&=&-\frac{2}{\int_Mu^2d\mu}\int_M[\triangle u+R(u)u]vd\mu,
~~~~\forall~v\in f^\perp.
\end{eqnarray*}
So the Euler-Lagrange equation is $\triangle u+R(u)u=cf$ for some real constant $c$. This justifies Definition~\ref{def-eigenvalue} given in the introduction, where the \emph{$f$-eigenvalue $\lambda$} and the \emph{$f$-eigenfunction $u$} is defined by the equality
\begin{equation}\label{eq-eigenvalue}
\triangle u\equiv-\lambda u ~~(\mathrm{mod}~f).
\end{equation}

\begin{proposition}
Let $u_1,u_2\in f^\perp$ be two $f$-eigenfunctions on $M$ with distinct $f$-eigenvalues $\lambda_1\ne\lambda_2$. Then $u_1,u_2$ are orthogonal to each other, i.e.,
$\int_Mu_1u_2d\mu=0.$
\end{proposition}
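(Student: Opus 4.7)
The plan is to mimic the classical proof that eigenfunctions of the Laplacian with distinct eigenvalues are $L^2$-orthogonal, using self-adjointness of $\Delta$ on the closed manifold $M$, and exploiting the fact that both $u_1$ and $u_2$ lie in $f^\perp$ to kill off the extra $f$-terms that appear in the $f$-eigenvalue equation.

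Concretely, I would begin by unpacking Definition~\ref{def-eigenvalue}: there exist real constants $c_1,c_2$ such that
\begin{equation*}
\Delta u_1 = -\lambda_1 u_1 + c_1 f, \qquad \Delta u_2 = -\lambda_2 u_2 + c_2 f.
\end{equation*}
Since $\partial M=\emptyset$, Green's identity gives $\int_M u_1\Delta u_2\,d\mu=\int_M u_2\Delta u_1\,d\mu$. Substituting the two equations above into each side, I obtain
\begin{equation*}
-\lambda_2\int_M u_1u_2\,d\mu + c_2\int_M u_1 f\,d\mu = -\lambda_1\int_M u_1u_2\,d\mu + c_1\int_M u_2 f\,d\mu.
\end{equation*}

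The key simplification comes from the assumption $u_1,u_2\in f^\perp$, which makes both $\int_M u_1 f\,d\mu$ and $\int_M u_2 f\,d\mu$ vanish. What remains is $(\lambda_1-\lambda_2)\int_M u_1u_2\,d\mu=0$, and since $\lambda_1\neq\lambda_2$ we conclude $\int_M u_1u_2\,d\mu=0$, as claimed.

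There is really no obstacle here: the whole point of requiring the $f$-eigenfunctions to live in $f^\perp$ is precisely to make the non-self-adjoint-looking ``mod $f$'' equation behave like a genuine eigenvalue equation when tested against another element of $f^\perp$. If I had to name a subtle point, it would be observing that the constants $c_1,c_2$ need not be known or controlled individually; they are silently eliminated by the orthogonality condition, which is why the proof is so short.
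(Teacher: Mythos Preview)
Your proof is correct and follows exactly the same approach as the paper: apply Green's identity to $\int_M(u_2\Delta u_1-u_1\Delta u_2)\,d\mu$, use $u_1,u_2\in f^\perp$ to eliminate the $f$-terms, and divide by $\lambda_2-\lambda_1$. The paper's version is simply more terse, compressing all of this into a single line.
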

\begin{proof}
By the definition of the $f$-eigenfunctions,
$0=\int_M\triangle u_1u_2d\mu-\int_M u_1\triangle u_2d\mu
=(\lambda_2-\lambda_1)\int_Mu_1u_2d\mu.$
\end{proof}

We can also define the $k$-th $f$-eigenvalues recursively by min-max method.
\begin{definition}\label{def-eigenvalue2}
Fix a closed Riemannian manifold $M$ and a positive function $f:M\to \mathbb{R}^+$. Set
\begin{equation}
\lambda_1(f)=\inf_{u\in f^\perp}R(u),
\end{equation}
and let $\phi_{1,f}$ be the corresponding $f$-eigenfunction.
Suppose that $\{\lambda_i(f)\}_{i=1}^k$ have been defined, and $\{\phi_{i,f}\}_{i=1}^k$ are the corresponding $f$-eigenfunctions. Then set
\begin{equation}
\lambda_{k+1}(f)=\inf_{u\in \text{Span}\{f,\{\phi_{i,f}\}_{i=1}^k\}^\perp}R(u).
\end{equation}
Let $\phi_{k+1,f}\in \mathrm{Span}\{f,\{\phi_{i,f}\}_{i=1}^k\}^\perp\ne 0$ to be a nonzero minimizer. This defines the $f$-Spectrum:
\begin{equation}
0\leq\lambda_1(f)\leq\lambda_2(f)\leq\cdots\leq\lambda_k(f)
\leq\cdots
\end{equation}
\end{definition}

Obviously, the $f$-eigenvalues are exactly the usual eigenvalues when $f$ is a constant. In the general case, we want to know the relationships between these two kinds of eigenvalues.
\begin{proposition}\label{prop-interlace0}
Under the assumptions and notations as above, we have
\begin{equation}\label{eq-35}
\lambda_1(f)\geq\lambda_1\frac{(\int_{M}fd\mu)^2}{\int_{M}d\mu\int_{M}f^2d\mu}>0.
\end{equation}
\end{proposition}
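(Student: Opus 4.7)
The plan is to reduce the $f$-orthogonality condition $\int_M fu\,d\mu=0$ to the standard zero-mean condition, so that the ordinary Poincaré inequality can be applied. Concretely, given any nonzero $u\in f^\perp$, I would split $u=\bar u+(u-\bar u)$ with $\bar u:=\frac{1}{V}\int_M u\,d\mu$ (where $V:=\int_M d\mu$), and note that $\int_M|\nabla u|^2 d\mu=\int_M|\nabla(u-\bar u)|^2 d\mu\ge \lambda_1\int_M (u-\bar u)^2 d\mu$ by the classical variational characterization of $\lambda_1$, since $u-\bar u$ has zero mean.

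The next step is to control $\bar u^2 V$ (i.e.\ the $L^2$-mass of the constant part) in terms of $\int_M (u-\bar u)^2 d\mu$, using the hypothesis $\int_M fu\,d\mu=0$. Writing $\bar f:=\frac{1}{V}\int_M f\,d\mu$, the identity $\int_M f(u-\bar u)\,d\mu=\int_M(f-\bar f)(u-\bar u)\,d\mu$ (the cross term vanishes because $u-\bar u\perp 1$) together with $\bar u\int_M f\,d\mu=-\int_M f(u-\bar u)\,d\mu$ gives, by Cauchy–Schwarz,
\begin{equation*}
\bar u^2\Bigl(\int_M f\,d\mu\Bigr)^2\le \int_M(f-\bar f)^2 d\mu\cdot \int_M(u-\bar u)^2 d\mu.
\end{equation*}
Since $\int_M(f-\bar f)^2 d\mu=\int_M f^2 d\mu-\frac{1}{V}(\int_M f\,d\mu)^2$, after multiplying by $V$ and rearranging I get
\begin{equation*}
V\bar u^2\le \frac{V\int_M f^2 d\mu-(\int_M f\,d\mu)^2}{(\int_M f\,d\mu)^2}\int_M (u-\bar u)^2 d\mu.
\end{equation*}
Adding $\int_M(u-\bar u)^2 d\mu$ to both sides and using $\int_M u^2 d\mu=\int_M(u-\bar u)^2 d\mu+V\bar u^2$ collapses the right-hand side to $\frac{V\int_M f^2 d\mu}{(\int_M f\,d\mu)^2}\int_M(u-\bar u)^2 d\mu$.

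Combining with the Poincaré step yields $\int_M|\nabla u|^2 d\mu\ge \lambda_1\frac{(\int_M f\,d\mu)^2}{V\int_M f^2 d\mu}\int_M u^2 d\mu$, and taking infimum over $u\in f^\perp$ gives the claimed lower bound for $\lambda_1(f)$. Strict positivity of the constant is immediate because $f>0$ forces $\int_M f\,d\mu>0$. The only step requiring care is step three: one must use $f-\bar f$ rather than $f$ itself when applying Cauchy–Schwarz, otherwise one ends up with a denominator $(\int_M f)^2+V\int_M f^2$ and only obtains a strictly weaker estimate; this orthogonalization trick is the essential point of the proof, and I anticipate it as the one place where a naive calculation would fall short of the stated sharper bound.
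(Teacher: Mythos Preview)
Your proof is correct and follows essentially the same route as the paper: decompose $u$ and $f$ into their means plus zero-mean remainders, use the orthogonality $u\perp f$ together with Cauchy--Schwarz applied to $(f-\bar f)(u-\bar u)$ to bound the constant part of $u$, and then invoke the standard Poincar\'e inequality on the zero-mean part. Your explicit observation that one must subtract $\bar f$ before applying Cauchy--Schwarz is exactly the key step in the paper's argument as well.
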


\begin{proof}
Denote $f$ as $f=f_0+\tilde{f}$, where $f_0=\frac{1}{|M|}\int_Mfd\mu>0$ and $\int_{M}\tilde{f}d\mu=0$.

For any $u\perp f$, written $u=u_0+\tilde{u}$, where $u_0=\frac{1}{|M|}\int_Mud\mu$. Then $u\perp f$ implies
\begin{equation}
\int_{M}\tilde{f}\tilde{u}d\mu=-f_0\int_Mud\mu.
\end{equation}
By Cauchy-Schwarz inequality, we have
\begin{eqnarray}
f_0^2|M|(\int_Mu^2d\mu-\int_M\tilde{u}^2d\mu)=f_0^2[\int_Mud\mu]^2
=[\int_{M}\tilde{f}\tilde{u}d\mu]^2\leq\int_{M}\tilde{f}^2d\mu\int_{M}\tilde{u}^2d\mu.
\end{eqnarray}
Since $\int_{M}\tilde{u}d\mu=0$, it gives $\lambda_1\int_{M}\tilde{u}^2d\mu\leq\int_{M}|\nabla\tilde{u}|^2d\mu$ by definition. Then
\begin{equation}
f_0^2|M|\int_{M}u^2d\mu\leq\frac{1}{\lambda_1}\int_{M}f^2d\mu\int_{M}|\nabla\tilde{u}|^2d\mu=\frac{1}{\lambda_1}\int_{M}f^2d\mu\int_{M}|\nabla u|^2d\mu.
\end{equation}
It shows that $R(u)\geq\lambda_1\frac{(\int_{M}fd\mu)^2}{|M|\int_{M}f^2d\mu}$ for any $u\in f^\perp$, then the desired result is given immediately by taking the infimum on $R(u)$.
\end{proof}

\begin{proposition}\label{prop-interlace1}
Under the assumptions and notations as the above definition, we have
\begin{equation}\label{eq-39}
\lambda_1(f)\leq\lambda_1.
\end{equation}
Denote $\mathscr{U}_1$ as the first eigenspace of the Laplace operator
(here the first eigenvalue $\lambda_1$ might have mulitiplicity larger than $1$). Then $\lambda_1(f)<\lambda_1$ if $f$ is NOT orthogonal to $\mathscr{U}_1$.
\end{proposition}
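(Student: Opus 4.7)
The plan is to leverage the variational characterization $\lambda_1(f)=\inf_{u\in f^\perp}R(u)$ by constructing an explicit test function drawn from a judiciously chosen finite-dimensional subspace. Specifically, I would take $\mathscr{V}:=\mathbb{R}\cdot 1\oplus\mathscr{U}_1$, the direct sum of the constant functions and the first eigenspace. Fix an $L^2$-orthonormal basis $\phi_1,\dots,\phi_m$ of $\mathscr{U}_1$ (where $m=\dim\mathscr{U}_1$); each $\phi_i$ is automatically $L^2$-orthogonal to constants because it is an eigenfunction with positive eigenvalue $\lambda_1>0$ on the closed (connected) manifold $M$. Hence $\dim\mathscr{V}=m+1$, and the single linear condition $\int_M fu\,d\mu=0$ carves out a subspace $\mathscr{V}\cap f^\perp$ of dimension at least $m\ge 1$, which therefore contains a nonzero element.

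Next, for any $u=c_0+\sum_{i=1}^m c_i\phi_i\in\mathscr{V}$, a short computation using $\Delta\phi_i=-\lambda_1\phi_i$ gives $\int_M u^2\,d\mu=c_0^2|M|+\sum_{i=1}^m c_i^2$ and $\int_M|\nabla u|^2\,d\mu=-\int_M u\Delta u\,d\mu=\lambda_1\sum_{i=1}^m c_i^2$, so
\begin{equation*}
R(u)=\lambda_1\cdot\frac{\sum_{i=1}^m c_i^2}{c_0^2|M|+\sum_{i=1}^m c_i^2}\le\lambda_1,
\end{equation*}
with equality iff $c_0=0$. Applying this bound to any nonzero $u\in\mathscr{V}\cap f^\perp$ immediately gives the first assertion $\lambda_1(f)\le\lambda_1$.

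For the strict inequality under the hypothesis $f\not\perp\mathscr{U}_1$, the plan is to exhibit a competitor in $\mathscr{V}\cap f^\perp$ whose constant component is nonzero. Since some $\alpha_k:=\int_M f\phi_k\,d\mu$ is nonzero by assumption, I would set $c_0=1$, $c_k=-\bigl(\int_M f\,d\mu\bigr)/\alpha_k$, and $c_i=0$ for $i\notin\{0,k\}$. This solves the single orthogonality relation $c_0\int_M f\,d\mu+\sum c_i\alpha_i=0$ while keeping $c_0\ne 0$, so the Rayleigh-quotient formula above yields $R(u)<\lambda_1$, and hence $\lambda_1(f)\le R(u)<\lambda_1$.

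There is really no substantive obstacle here: the argument is a one-line min-max trick coupled with a dimension count, and the evaluation of $R(u)$ on $\mathscr{V}$ reduces to elementary algebra. The guiding idea worth highlighting is that admitting a nonzero constant component into $u$ strictly depresses the Rayleigh quotient below $\lambda_1$, and such mixing is permitted inside $f^\perp$ precisely when $f$ has nontrivial overlap with $\mathscr{U}_1$; this also makes transparent the observation, noted in the introduction, that $f\perp\mathscr{U}_1$ is a necessary condition for $\lambda_1(f)=\lambda_1$.
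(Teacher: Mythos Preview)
Your argument is correct and is essentially the paper's own proof: both use as test function an element of $\mathbb{R}\cdot 1\oplus\mathscr{U}_1$ adjusted to lie in $f^\perp$, observe that its Rayleigh quotient equals $\lambda_1\cdot\dfrac{\sum c_i^2}{c_0^2|M|+\sum c_i^2}\le\lambda_1$, and note that the constant component can be taken nonzero precisely when $f\not\perp\mathscr{U}_1$. The only cosmetic difference is that the paper works with a single eigenfunction $\phi_1$ shifted by a constant $c=-\int_M f\phi_1\,d\mu\big/\int_M f\,d\mu$, whereas you phrase the first step via a dimension count on $\mathscr{V}\cap f^\perp$; your strict-inequality competitor $1+c_k\phi_k$ is exactly a scalar multiple of the paper's $\phi_k+c$.
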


\begin{proof}
Let $\phi_1$ be an eigenfunction of the first eigenvalue $\lambda_1$. Then there exists some constant $c$ such that $(\phi_1+c)\perp f$. Note that
\begin{equation}\label{eq-40}
R(\phi_1+c)=\frac{\int_M|\nabla\phi_1|^2d\mu}
{\int_M(\phi_1+c)^2d\mu}=
\frac{\lambda_1\int_M\phi_1^2d\mu}{\int_M\phi_1^2d\mu+c^2|M|}
\leq\lambda_1.
\end{equation}
So we have
\begin{equation}
\lambda_1(f)=\inf_{u\in f^\perp}R(u)\leq R(\phi_1+c)\leq\lambda_1.
\end{equation}
If $f$ is not orthogonal to $\mathscr{U}_1$, then there exists an eigenfunction $\phi_1$ such that $\int_Mf\phi_1d\mu\neq0$. Then $c=-\frac{\int_Mf\phi_1d\mu}{\int_Mfd\mu}\neq0$. By \eqref{eq-40} we know $\lambda_1(f)<\lambda_1$.
\end{proof}

\begin{proposition}\label{prop-interlace2}
Let $\lambda_k(f)$ be the $k$-th $f$-eigenvalue, and $\lambda_k$ be the $k$-th eigenvalue of the Laplacian. Then we have
\begin{equation}
\lambda_k(f)\leq\lambda_k.
\end{equation}
\end{proposition}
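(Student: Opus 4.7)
The natural strategy is the min-max (Courant–Fischer) principle, which the authors have already asserted holds for the $f$-spectrum just as it does for the classical spectrum. Specifically, I would first record the two characterizations
\begin{equation*}
\lambda_k(f)=\min_{\substack{V\subset f^\perp\\ \dim V=k}}\max_{0\ne u\in V}R(u),\qquad
\lambda_k=\min_{\substack{V\subset 1^\perp\\ \dim V=k}}\max_{0\ne u\in V}R(u).
\end{equation*}
The first of these follows from Definition~\ref{def-eigenvalue2} together with the fact that the $f$-eigenfunctions $\{\phi_{i,f}\}$ are obtained as successive minimizers of $R$ on nested codimension-$1$ subspaces of $f^\perp$; this is exactly the variational characterization of the spectrum of the self-adjoint operator $u\mapsto \Delta u-\tfrac{\langle\Delta u,f\rangle}{\|f\|^2}f$ acting on $f^\perp$.

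The heart of the argument is then to produce a $k$-dimensional test subspace in $f^\perp$ on which $R$ is bounded above by $\lambda_k$. Let $\phi_1,\ldots,\phi_k$ be $L^2$-orthonormal eigenfunctions of $\Delta$ with eigenvalues $\lambda_1\le\cdots\le\lambda_k$, and form the $(k+1)$-dimensional subspace
\begin{equation*}
W=\mathrm{Span}\{1,\phi_1,\ldots,\phi_k\}.
\end{equation*}
Since the orthogonality condition $\langle u,f\rangle=0$ imposes at most one linear constraint, $V:=W\cap f^\perp$ has dimension at least $k$; pick any $k$-dimensional subspace inside $V$ and call it $V_0$. For any $u=c_0+\sum_{i=1}^k c_i\phi_i\in V_0$ a direct computation (using that the $\phi_i$ are $L^2$-orthonormal and orthogonal to constants) yields
\begin{equation*}
R(u)=\frac{\sum_{i=1}^k c_i^2\lambda_i}{c_0^2|M|+\sum_{i=1}^k c_i^2}\le\frac{\lambda_k\sum_{i=1}^k c_i^2}{c_0^2|M|+\sum_{i=1}^k c_i^2}\le\lambda_k.
\end{equation*}
Applying the min-max characterization of $\lambda_k(f)$ with this particular $V_0$ gives $\lambda_k(f)\le\max_{u\in V_0}R(u)\le\lambda_k$, which is what we want.

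The only conceptual subtlety — and the step I would spell out carefully — is justifying the min-max formula for $\lambda_k(f)$ from Definition~\ref{def-eigenvalue2}. The forward direction (that the defining infimum is bounded above by the min-max) is immediate upon testing with $V=\mathrm{Span}\{\phi_{1,f},\ldots,\phi_{k,f}\}$; the reverse direction requires the standard ``dimension-counting'' argument: any $k$-dimensional subspace $V\subset f^\perp$ must intersect $\mathrm{Span}\{\phi_{k,f},\phi_{k+1,f},\ldots\}$ nontrivially, and on the latter space $R\ge\lambda_k(f)$. This relies on the spectral completeness of the $f$-eigenfunctions in $f^\perp$, which in turn is the content of the authors' remark that ``such $f$-eigenvalues form a spectrum.'' Once this foundational point is accepted, the rest of the proof is the routine test-space construction outlined above, and no further obstacle arises.
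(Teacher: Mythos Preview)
Your argument is correct, and the computational core---building a test object inside $W=\mathrm{Span}\{1,\phi_1,\ldots,\phi_k\}$ and bounding $R$ by $\lambda_k$ there---is exactly what the paper does. The difference lies in which variational characterization you feed this into.

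You pass through the Courant--Fischer min-max formula for $\lambda_k(f)$, which (as you yourself note) has to be derived from Definition~\ref{def-eigenvalue2} via completeness of the $f$-eigenfunctions in $f^\perp$. The paper bypasses this detour: it uses Definition~\ref{def-eigenvalue2} \emph{directly}, namely $\lambda_k(f)=\inf\{R(u):u\perp f,\phi_{1,f},\ldots,\phi_{k-1,f}\}$. Since $W$ is $(k+1)$-dimensional and the orthogonality constraints $u\perp f,\phi_{1,f},\ldots,\phi_{k-1,f}$ number only $k$, there is a single nonzero $\phi\in W$ satisfying all of them; plugging this one test function into the definitional infimum gives $\lambda_k(f)\le R(\phi)\le\lambda_k$ by the same Rayleigh-quotient calculation you wrote out. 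So the paper needs only the recursive \emph{definition}, not the min-max theorem, and hence avoids the ``conceptual subtlety'' you flagged. Your route is the textbook one and perfectly valid once the min-max principle is in hand; the paper's is slightly more economical in this particular setting.
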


\begin{proof}
Let $\{\phi_i\}_{i=1}^k$ be the first $k$ eigenfunctions (we do not count their multiplicities). Then there exist suitable non-trivial constants $\{c_i\}_{i=0}^k$ such that
\begin{equation*}
(c_0+\sum_{i=1}^kc_i\phi_i)\perp
\mathrm{Span}\{f,\{\phi_{i,f}\}_{i=1}^{k-1}\}.
\end{equation*}
Use $\phi=c_0+\sum_{i=1}^kc_i\phi_i$ as a test function for $\lambda_k(f)$.
The rest part of the proof is similar to that of the above proposition.
\end{proof}

\begin{proposition}\label{prop-interlace3}
Notations as above. We have
\begin{equation}
\lambda_k(f)\geq\lambda_{k-1}.
\end{equation}
\end{proposition}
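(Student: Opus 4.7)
The plan is to reduce this lower bound to the classical min--max characterization of the usual Laplacian spectrum, tested against the span of the first $k$ $f$-eigenfunctions. Adopt the convention that $\lambda_0=0$ is the eigenvalue of the constants (so the paper's positive eigenvalues $\lambda_1,\lambda_2,\ldots$ sit in the reindexed sequence $0=\lambda_0\le\lambda_1\le\lambda_2\le\cdots$). The standard min--max formula then reads
\[ \lambda_{k-1}=\min_{\dim V=k}\,\max_{u\in V\setminus\{0\}}R(u), \]
so it suffices to exhibit a single $k$-dimensional test subspace $V\subset C^\infty(M)$ on which $R\le\lambda_k(f)$.

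The natural candidate is $V=\mathrm{Span}\{\phi_{1,f},\ldots,\phi_{k,f}\}$, the span of the first $k$ $f$-eigenfunctions supplied by Definition~\ref{def-eigenvalue2}. These all lie in $f^\perp$, and by the orthogonality proved at the start of this section they form an orthogonal family whenever the associated $f$-eigenvalues are distinct; multiplicities are handled by a routine Gram--Schmidt step inside each $f$-eigenspace, which preserves the span and the bound being sought. So $\dim V=k$.

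The core computation is an integration by parts for $u=\sum_{i=1}^k c_i\phi_{i,f}\in V$. Substituting the defining relation $\Delta\phi_{i,f}=-\lambda_i(f)\phi_{i,f}+\gamma_i f$ (with real constants $\gamma_i$) into $\int_M|\nabla u|^2\,d\mu=-\int_M u\,\Delta u\,d\mu$, and using the two orthogonalities $\phi_{i,f}\perp\phi_{j,f}$ for $i\ne j$ and $\phi_{i,f}\perp f$ to annihilate every cross term, one is left with
\[ \int_M|\nabla u|^2\,d\mu=\sum_{i=1}^k c_i^2\,\lambda_i(f)\,\|\phi_{i,f}\|^2\le\lambda_k(f)\int_M u^2\,d\mu. \]
Hence $\max_{u\in V\setminus\{0\}}R(u)\le\lambda_k(f)$, and plugging $V$ into the min--max formula above yields $\lambda_{k-1}\le\lambda_k(f)$, which is the desired inequality.

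No substantive obstacle is anticipated: the argument is essentially a Cauchy interlace in disguise, already flagged by the authors in the introduction. The only technical points are the Gram--Schmidt bookkeeping to deal with possible degeneracies of the $f$-spectrum, and a standard appeal to elliptic regularity (since $f$ is smooth, each $\phi_{i,f}$ is smooth enough) to justify the integration by parts. Combined with Proposition~\ref{prop-interlace2}, this closes the two-sided interlacing $\lambda_{k-1}\le\lambda_k(f)\le\lambda_k$ promised earlier.
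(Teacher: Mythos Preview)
Your proposal is correct and follows essentially the same route as the paper: both arguments work in the $k$-dimensional span $V=\mathrm{Span}\{\phi_{1,f},\ldots,\phi_{k,f}\}$ and rely on the identical computation $R\bigl(\sum_i c_i\phi_{i,f}\bigr)=\frac{\sum_i c_i^2\lambda_i(f)\|\phi_{i,f}\|^2}{\sum_i c_i^2\|\phi_{i,f}\|^2}\le\lambda_k(f)$. The only cosmetic difference is that the paper invokes the direct infimum characterization $\lambda_{k-1}=\inf\{R(u):u\perp 1,\phi_1,\ldots,\phi_{k-2}\}$ and uses dimension counting to extract a single test function from $V$ meeting those $k-1$ constraints, whereas you invoke the Courant--Fischer min--max formula and feed in the whole subspace $V$; these are two sides of the same coin. (Incidentally, your Gram--Schmidt remark is unnecessary: by Definition~\ref{def-eigenvalue2} each $\phi_{k+1,f}$ is chosen in $\mathrm{Span}\{f,\phi_{1,f},\ldots,\phi_{k,f}\}^\perp$, so the $\phi_{i,f}$ are already mutually orthogonal by construction.)
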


\begin{proof}
Let $\{\phi_{i,f}\}_{i=1}^k$ be the first $k$ $f$-eigenfunctions. Then there exist some non-trivial constants $\{c_i\}_{i=1}^k$ such that
\begin{equation*}
\sum_{i=1}^kc_i\phi_{i,f}\perp
\mathrm{Span}\{1,\{\phi_i\}_{i=1}^{k-2}\}.
\end{equation*}
Taking $\sum_{i=1}^kc_i\phi_{i,f}$ to be a test function of $\lambda_{k-1}$, we can obtain the conclusion as in the previous two propositions.
\end{proof}

Combing Proposition~\ref{prop-interlace2} and Proposition~\ref{prop-interlace3}, we have proved the following
\begin{theorem}[Interlace Theorem]
\begin{equation}
0<\lambda_1(f)\leq\lambda_1\leq\lambda_2(f)\leq
\lambda_2\leq\cdots\leq\lambda_k(f)\leq\lambda_k
\leq\cdots\rightarrow+\infty.
\end{equation}
\end{theorem}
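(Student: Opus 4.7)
The statement is a direct corollary of what has just been proven: Propositions~\ref{prop-interlace0}, \ref{prop-interlace2}, and \ref{prop-interlace3} already supply all three ingredients of the chain, so the plan is simply to assemble them carefully and then invoke a standard compactness fact to get divergence to $+\infty$.

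First I would write down the chain from left to right. The strict positivity $0<\lambda_1(f)$ is exactly Proposition~\ref{prop-interlace0}, since $f>0$ on a compact manifold implies $\int_M f\,d\mu>0$ and $\int_M f^2\,d\mu<\infty$, so the factor $(\int_M f\,d\mu)^2/(|M|\int_M f^2\,d\mu)$ is a strictly positive constant. Next, for every $k\geq 1$, Proposition~\ref{prop-interlace2} gives $\lambda_k(f)\leq\lambda_k$. Finally, for every $k\geq 2$, Proposition~\ref{prop-interlace3} gives $\lambda_k(f)\geq\lambda_{k-1}$. Alternating these two inequalities yields
\begin{equation*}
\lambda_{k-1}\leq\lambda_k(f)\leq\lambda_k,
\end{equation*}
which, combined with $0<\lambda_1(f)\leq\lambda_1$ from the case $k=1$, produces the full interlacing chain as stated.

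To conclude with $\lambda_k\to+\infty$, I would invoke the classical spectral theorem for the Laplace operator on a closed Riemannian manifold (see, e.g., \cite{Chavel}): the spectrum $\{\lambda_k\}$ consists of a discrete sequence with finite multiplicities accumulating only at $+\infty$. Squeezing $\lambda_k(f)$ between $\lambda_{k-1}$ and $\lambda_k$ then forces $\lambda_k(f)\to+\infty$ as well.

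There is essentially no obstacle: the real content has been done in the three propositions above, and this theorem is just the bookkeeping that records them as a single interlacing statement. The only point worth double-checking is the edge case $k=1$ in Proposition~\ref{prop-interlace3}, which one does not need since the chain is anchored on the left by the independent positivity bound in Proposition~\ref{prop-interlace0}.
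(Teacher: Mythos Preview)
Your proposal is correct and follows essentially the same approach as the paper, which simply states that the theorem is obtained by combining Propositions~\ref{prop-interlace2} and~\ref{prop-interlace3}. You add a bit more detail --- explicitly invoking Proposition~\ref{prop-interlace0} for the strict positivity $0<\lambda_1(f)$ and citing the standard spectral theorem for $\lambda_k\to+\infty$ --- but the substance is identical.
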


\begin{corollary}
Let $f,h:M^n\rightarrow\mathbb{R}^+$ be positive functions on $M^n$. Then $\lambda_k(f)\leq\lambda_{k+1}(h)$.
\end{corollary}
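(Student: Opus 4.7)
The plan is to deduce this corollary directly from the Interlace Theorem stated just above, by using the ordinary Laplace eigenvalue $\lambda_k$ as a pivot that is independent of the auxiliary weight function.

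First, I would apply the Interlace Theorem to the positive weight $f$ and read off the single inequality $\lambda_k(f)\leq\lambda_k$, which is the content of Proposition~\ref{prop-interlace2}. This bounds the $f$-spectrum from above by the ordinary Laplace spectrum of $(M,g)$.

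Next, I would apply the Interlace Theorem to the other positive weight $h$, but this time extract the lower bound on the $(k+1)$-th entry: Proposition~\ref{prop-interlace3} with index $k+1$ gives exactly $\lambda_{k+1}(h)\geq\lambda_k$. Chaining the two inequalities through the common quantity $\lambda_k$ yields
\begin{equation*}
\lambda_k(f)\leq\lambda_k\leq\lambda_{k+1}(h),
\end{equation*}
which is the desired conclusion.

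There is really no obstacle here; the corollary is a formal consequence of the Interlace Theorem. The only conceptual point worth emphasizing is that the intermediate quantity $\lambda_k$, being the usual $k$-th eigenvalue of the Laplacian on $(M,g)$, depends only on the Riemannian structure and not on any choice of weight, so it is legitimate to use it to compare the weighted spectra $\{\lambda_k(f)\}$ and $\{\lambda_k(h)\}$ arising from two different positive functions.
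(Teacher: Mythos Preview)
Your argument is correct and is exactly the intended one: the corollary is an immediate consequence of the Interlace Theorem, obtained by applying $\lambda_k(f)\le\lambda_k$ (Proposition~\ref{prop-interlace2}) and $\lambda_k\le\lambda_{k+1}(h)$ (Proposition~\ref{prop-interlace3}) and chaining through the weight-independent Laplace eigenvalue $\lambda_k$.
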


Below is a sharp estimation of $\lambda_k(f)$.

\begin{proposition}\label{prop-sharp}
Let $M$ be a closed Riemannian manifold and $f:M\to \mathbb{R}^+$ a positive function. Using the same notations as above, we have
\begin{equation}
\lambda_k\leq\lambda_k(f)+R(f).
\end{equation}
\end{proposition}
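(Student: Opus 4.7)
The plan is to apply the classical Courant--Fischer min-max principle for the ordinary Laplace spectrum, constructing an explicit $(k+1)$-dimensional test subspace on which every Rayleigh quotient is bounded by $\lambda_k(f)+R(f)$. Concretely, I would take
\[
V_{k+1}=\mathrm{Span}\{f,\phi_{1,f},\phi_{2,f},\ldots,\phi_{k,f}\},
\]
where $\phi_{1,f},\ldots,\phi_{k,f}$ are the first $k$ $f$-eigenfunctions from Definition~\ref{def-eigenvalue2}. Since by construction $\phi_{i,f}\perp f$ and $\phi_{i,f}\perp\phi_{j,f}$ for $i\ne j$, these $k+1$ functions are linearly independent, so $\dim V_{k+1}=k+1$. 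Via the min-max formula $\lambda_k=\min_{\dim V=k+1}\max_{0\ne u\in V}R(u)$, it then suffices to show $R(u)\le \lambda_k(f)+R(f)$ for every nonzero $u\in V_{k+1}$.

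Writing a general element of $V_{k+1}$ as $u=\alpha f+v$ with $v=\sum_{i=1}^k\beta_i\phi_{i,f}$, the orthogonality $\int_M fv\,d\mu=0$ gives $\int_M u^2\,d\mu=\alpha^2\int_M f^2\,d\mu+\int_M v^2\,d\mu$, while
\[
\int_M|\nabla u|^2\,d\mu=\alpha^2\int_M|\nabla f|^2\,d\mu+2\alpha\int_M\nabla f\cdot\nabla v\,d\mu+\int_M|\nabla v|^2\,d\mu.
\]
A clean observation is that although the $\phi_{i,f}$ are not true Laplace eigenfunctions, the relation $\triangle\phi_{i,f}\equiv -\lambda_i(f)\phi_{i,f}\pmod{f}$, combined with the orthogonalities $\phi_{i,f}\perp\phi_{j,f}$ and $\phi_{i,f}\perp f$, still forces $\int_M\nabla\phi_{i,f}\cdot\nabla\phi_{j,f}\,d\mu=0$ for $i\ne j$ after one integration by parts. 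Hence one has the clean diagonalisation $\int_M|\nabla v|^2\,d\mu=\sum_{i=1}^k\beta_i^2\lambda_i(f)\int_M\phi_{i,f}^2\,d\mu\le \lambda_k(f)\int_M v^2\,d\mu$.

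The main obstacle is the cross term $2\alpha\int_M\nabla f\cdot\nabla v\,d\mu$: it has no definite sign and is not annihilated by any evident orthogonality. To control it, I would first combine Cauchy--Schwarz in the $L^2$ inner product of vector fields with the above bound on $\int_M|\nabla v|^2\,d\mu$ to obtain
\[
\Bigl(\int_M\nabla f\cdot\nabla v\,d\mu\Bigr)^2\le \int_M|\nabla f|^2\,d\mu\int_M|\nabla v|^2\,d\mu\le \lambda_k(f)\int_M|\nabla f|^2\,d\mu\int_M v^2\,d\mu,
\]
and then absorb the cross term via the weighted AM--GM inequality $2\alpha E\le \lambda_k(f)A\alpha^2+E^2/(\lambda_k(f)A)$ with $A=\int_M f^2\,d\mu$ and $E=\int_M\nabla f\cdot\nabla v\,d\mu$ (legitimate since $\lambda_k(f)>0$ by the interlace theorem). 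Collecting the terms, the numerator of $R(u)$ is bounded above by $(R(f)+\lambda_k(f))\bigl(\alpha^2\int_M f^2\,d\mu+\int_M v^2\,d\mu\bigr)=(R(f)+\lambda_k(f))\int_M u^2\,d\mu$, which is the desired uniform bound on $R(u)$ over $V_{k+1}$ and hence on $\lambda_k$.
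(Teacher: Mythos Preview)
Your argument is correct and follows essentially the same route as the paper: the same $(k+1)$-dimensional subspace $\mathrm{Span}\{f,\phi_{1,f},\ldots,\phi_{k,f}\}$, the same gradient-orthogonality $\int_M\nabla\phi_{i,f}\cdot\nabla\phi_{j,f}\,d\mu=0$ giving $R(v)\le\lambda_k(f)$, and a Cauchy--Schwarz estimate to control the interaction between $f$ and $v$. The only cosmetic differences are that the paper selects a single test vector in this subspace orthogonal to $\mathrm{Span}\{1,\phi_1,\ldots,\phi_{k-1}\}$ rather than invoking Courant--Fischer, and it packages the cross-term bound as the triangle inequality $\|\alpha\nabla f+\nabla v\|\le\|\alpha\nabla f\|+\|\nabla v\|$ followed by the two-dimensional Cauchy--Schwarz $(\|\alpha f\|\cdot\sqrt{R(f)}+\|v\|\cdot\sqrt{R(v)})^2\le(\|\alpha f\|^2+\|v\|^2)(R(f)+R(v))$, which yields $R(\alpha f+v)\le R(f)+R(v)$ directly and avoids your weighted AM--GM step.
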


\begin{proof}
Let $\{\phi_{i,f}\}_{i=1}^k$ be the first $k$ $f$-eigenfunctions (orthogonal to each other). There exist some non-trivial constants $\{c_i\}_{i=0}^k$ such that
\begin{equation*}
c_0f+\sum_{i=1}^kc_i\phi_{i,f}\perp\text{Span}\{1,\{\phi_i\}
_{i=1}^{k-1}\}.
\end{equation*}
As a preparation, by Cauchy-Schwarz inequality we obtain
\begin{eqnarray}
(||c_0\nabla f||+||\sum_{i=1}^kc_i\nabla\phi_{i,f}||)^2
=(||c_0f||\cdot\frac{||\nabla f||}{||f||}+||\sum_{i=1}^kc_i\phi_{i,f}||
\frac{||\sum_{i=1}^kc_i\nabla\phi_{i,f}||}
{||\sum_{i=1}^kc_i\phi_{i,f}||})^2&& \notag\\
\leq(||c_0f||^2+||\sum_{i=1}^kc_i\phi_{i,f}||^2)(\frac{||\nabla f||^2}{||f||^2}+\frac{||\sum_{i=1}^kc_i\nabla\phi_{i,f}||^2}
{||\sum_{i=1}^kc_i\phi_{i,f}||^2}).&&
\end{eqnarray}
So we have
\begin{eqnarray*}
\lambda_k\leq R(c_0f+\sum_{i=1}^kc_i\phi_{i,f})
&=&\frac{||c_0\nabla f+\sum_{i=1}^kc_i\nabla\phi_{i,f}||^2}
{||c_0f+\sum_{i=1}^kc_i\phi_{i,f}||^2}\\
&\leq&\frac{(||c_0\nabla f||+||\sum_{i=1}^kc_i\nabla\phi_{i,f}||)^2}
{||c_0f||^2+||\sum_{i=1}^kc_i\phi_{i,f}||^2}\\
&\leq&\frac{||\nabla f||^2}{||f||^2}+\frac{||\sum_{i=1}^kc_i\nabla\phi_{i,f}||^2}
{||\sum_{i=1}^kc_i\phi_{i,f}||^2}\\
&=&R(f)+\frac{\sum_{i=1}^kc_i^2\lambda_i(f)||\phi_{i,f}||^2}
{\sum_{i=1}^kc_i^2||\phi_{i,f}||^2}\\
&\leq&R(f)+\lambda_k(f).\qedhere
\end{eqnarray*}
\end{proof}

\section{Counterexamples to the conjecture}
Despite our effort, the estimation of $\lambda_1(f)$ we obtained in the previous section is not enough to prove Conjecture~\ref{conj-f}. Naturally one is led to doubt this conjecture and search for counterexamples. Soon after considering this possibility seriously we found such examples.

Before giving the details, let us explain the basic idea at first. Suppose there exists a counterexample to Conjecture~\ref{conj-f}, which is a triple $(M, f, \phi)$ consisting of a closed Riemannian manifold $(M,g)$, a positive function $f:M\rightarrow\mathbb{R}^+$, and a smooth function $\phi:M\rightarrow\mathbb{R}$ such that $f\perp \mathscr{U}_1$, $\phi\perp f$ and $\int_M|\nabla\phi|^2d\mu<\lambda_1\int_M\phi^2d\mu$.
We would like it to be as simple as possible. So

1. We consider $M$ to be the unit sphere $S^n$ or a flat torus
with their standard metrics, because they have many symmetries, and their eigenvalues $\{0<\lambda_1<\cdots\}$ and eigenfunctions $\{\phi_k\} (\phi_0\equiv 1)$ are known already. (Here we count the multiplicities of the eigenvalues. This convention is different from the previous section.) Then we can expand any function using the eigenfunctions.

2. We do not fix the positive function $f$ beforehand, partly because its positivity is hard to characterize in terms of its Fourier series. Instead we begin with $\phi=\sum a_k\phi_k$. Because the higher order eigenfunctions $\phi_k$ have positive contribution to the Rayleigh quotient $R(\phi)$, we would like most of the $a_k$ vanish so that $R(\phi)$ is smaller and the expression $\phi=\sum a_k\phi_k$ is simpler. The best choice is $\phi=\epsilon+\phi_2$ where the constant $\epsilon$ will be chosen later.

3. The positive function $f$ is found in the last step. To guarantee $f\perp \phi$, $\phi$ should change its sign on $M$, which is also a sufficient condition to guarantee the existence of a positive function $f$ orthogonal to $\phi$. So $\epsilon$ must be small enough. To have $f\perp \mathscr{U}_1$, it is sufficient for $f$ to be an even function, because on $S^n$ and square torus the first eigenspace is spanned by odd functions. By symmetry, we need only to find a positive
$f$ on a fundamental domain $U$ (a quadrant) of $M$ so that $\int_U f\phi d\mu=0$ and extend it to be an even function on $M$. \\

\noindent \textbf{Counterexample~1}.
Let $S^n=\{(x_1,x_2,\cdots,x_{n+1})\in \mathbb{R}^{n+1}|x_1^2+x_2^2+\cdots+x_{n+1}^2=1\}$ be the  sphere with the standard induced metric. It is well-known that $\lambda_1(S^n)=n, \lambda_2(S^n)=2n+2$,
and $\phi_2=x_1^2-x_2^2$ is one of the second eigenfunctions which takes value in $[-1,1]$.

Let $\phi=\epsilon+x_1^2-x_2^2$ on $S^n$. There should be $\epsilon\in(-1,1)$ so that $\phi$ changes sign. It is easy to see
\begin{equation}
R(\phi)<\lambda_1~
\Longleftrightarrow~
\left(\frac{\lambda_2}{\lambda_1}-1\right)
\frac{\int_{S^n}(\phi_2)^2d\mu}{\mathrm{Vol}(S^n)}<\epsilon^2.
\end{equation}
By direct computation,
\begin{equation}
\left(\frac{\lambda_2}{\lambda_1}-1\right)\frac{\int_{S^n}
(x_1^2-x_2^2)^2d\mu}{\mathrm{Vol}(S^n)}
=\frac{4(n+2)}{n(n+1)(n+3)}.
\end{equation}
When $n\geq2$, $c_n\triangleq\frac{4(n+2)}{n(n+1)(n+3)}<1$, thus we can take $\epsilon\in(\sqrt{c_n},1)$, which guarantees that $ R(\phi)<\lambda_1$ and $\phi$ changes its sign. By the analysis above, this shows that Conjecture~\ref{conj-f} does not hold for higher dimensional sphere $S^n$ when $n\ge 2$.\\

\noindent \textbf{Counterexample~2}.
For the square torus $T^2=S^1\times S^1$, we know $\lambda_1(T^2)=1, \lambda_2(T^2)=2$,
and $\phi_2=\cos x\cos y$ is a second eigenfunction taking value in $[-1,1]$.
Let $\phi=\epsilon+\cos x\cos y$ on $T^2$.
By direct computation,
\begin{equation}
\left(\frac{\lambda_2}{\lambda_1}-1\right)
\frac{\int_{T^2}(\phi_2)^2d\mu}{\mathrm{Vol}(T^2)}
=
\left(\frac{\lambda_2}{\lambda_1}-1\right)
\frac{\int_{0}^{2\pi}\int_{0}^{2\pi}
\cos^2 x\cos^2 y dxdy}{4\pi^2}
=\frac{1}{4}.
\end{equation}
Thus we need only to take $\epsilon\in(\frac{1}{2},1)$. Then
$ R(\phi)<\lambda_1$ and $\phi$ changes its sign. By the same reason as  above, we know that Conjecture~\ref{conj-f} is not true for the square torus.

\begin{remark}
Although our Conjecture~\ref{conj-f} is generally not true, by Proposition~\ref{prop-interlace0} one can see that when $u$ is orthogonal to a fixed function $f$ and $\int_M f d\mu\ne 0$, there exists some universal constant $C$ such that the Poincar\'e type inequality
\[
C\int_M  u^2 d\mu ~\leq~ \int_M|\nabla u|^2 d\mu.
\]
still holds.
\end{remark}

\end{document}